\documentclass{article}

\usepackage{amsfonts}
\usepackage{amsmath}
\usepackage{mathtools}
\usepackage{multirow}
\usepackage{epstopdf}
\usepackage{algorithm}
\usepackage{algorithmic}
\usepackage{bm}
\usepackage{microtype}
\usepackage{amssymb}
\usepackage{enumerate}
\usepackage{marvosym}
\usepackage{color}
\usepackage{marvosym}
\usepackage{subcaption}
\usepackage{hyperref}

\newtheorem{theorem}{Theorem}

\newtheorem{lemma}{Lemma}
\newtheorem{proof}{Proof}

\newcommand{\x}{\mathbf{x}}
\newcommand{\y}{\mathbf{y}}

\newcommand{\e}{\mathbf{e}}

\newcommand{\0}{\mathbf{0}}

\newcommand{\E}{\mathbb{E}}
\newcommand{\bO}{{\cal O}}

\newcommand{\F}{\mathcal{F}}

\newcommand{\V}{\mathbf{V}}

\newcommand{\M}{\mathbf{M}}

\newcommand{\G}{\mathbf{G}}

\newcommand{\X}{\mathbf{X}}
\newcommand{\A}{\mathbf{A}}
\newcommand{\Y}{\mathbf{Y}}
\newcommand{\bL}{\mathbf{L}}
\newcommand{\bR}{\mathbf{R}}
\newcommand{\bP}{\mathbf{P}}
\newcommand{\bQ}{\mathbf{Q}}

\newcommand{\<}{\left\langle}
\renewcommand{\>}{\right\rangle}

\usepackage[accepted]{icml2020}

\icmltitlerunning{Convergence Rate Analysis of SOAP with Arbitrary Orthogonal Projection Matrices}

\begin{document}

\onecolumn
\icmltitle{Convergence Rate Analysis of SOAP\\ with Arbitrary Orthogonal Projection Matrices}

\begin{icmlauthorlist}
\icmlauthor{Huan Li}{to}
\icmlauthor{Zhouchen Lin}{goo}
\end{icmlauthorlist}

\icmlaffiliation{to}{Institute of Robotics and Automatic Information Systems, College of Artificial Intelligence, Nankai University, Tianjin, China.\\}
\icmlaffiliation{goo}{National Key Lab of General AI, School of Intelligence Science and Technology, Peking University, Beijing, China.\\}
\icmlcorrespondingauthor{Huan Li and Zhouchen Lin}{lihuanss@nankai.edu.cn, zlin@pku.edu.cn}





\vskip 0.3in



\printAffiliationsAndNotice{}  

\begin{abstract}
In this short note, we establish, for the first time, the convergence rate of SOAP, an efficient and popular matrix-based optimizer for training deep neural networks. Our analysis extends to a more general variant of SOAP that admits arbitrary orthogonal projection matrices and requires only that these matrices be conditionally independent of the current stochastic gradient at each iteration. For example, they may be constructed from information available up to the preceding step.
\end{abstract}

\section{Introduction}

Pre-training large language models incurs substantial computational costs. Over the past decade, the Adam family of algorithms, including AdaGrad \citep{Duchi-2011-jmlr,McMahan-2010-colt}, RMSProp \citep{RMSProp-2012-hinton}, Adam \citep{adam-15-iclr}, and AdamW \citep{adanw-2019-iclr}, has served as the de facto standard for optimizing deep neural networks. These methods treat the network's parameters as a single high-dimensional vector. More recently, matrix-based optimizers, which exploit the inherent matrix structure of deep neural network parameters, have garnered increasing attention and demonstrated the potential to outperform their vector-based counterparts. Prominent examples include Shampoo \citep{shampoo-icml-18}, Muon \citep{muon2024}, SOAP \citep{soap-2025-iclr}, Splus \citep{splus-2025-nips}, and ARO \citep{aro-gong-26}. The latter three approaches can be collectively classified as projection-based optimizers.

Although the convergence behavior of vector-based optimizers has been thoroughly studied in the literature \citep{bottou-2022-tmlr,luo-2020-iclr,lihuan-rmsprop-2024,luo-2022-nips,hong-2024-adam,haochuanli-2023,Li-2025-nips}, the theoretical understanding of matrix-based optimizers remains comparatively underdeveloped. Among matrix-based methods, Muon constitutes a notable exception: owing to its relatively simple structure, its convergence properties have been extensively investigated and are now largely understood \citep{muon-hongmingyi-25,muon-shen-25,muon-chen-25,muon-sato-25}. For more intricate optimizers, recent progress includes the work of \citet{shampoo-li-26}, who established rigorous convergence guarantees for the practical AdamW-style implementation of Shampoo. Nevertheless, with the exception of these two cases, the convergence analysis of other matrix-based optimizers remains an open problem to the best of our knowledge, especially the representative SOAP optimizer. The work by \citet{galore-icml-25} perhaps constitutes the most closely related study, analyzing a variant of the GaLore algorithm \citep{galore-icml-24}. However, their approach replaces the structured SVD-based orthogonal projection employed by GaLore and SOAP with a random projection, and it utilizes momentum stochastic gradient descent (MSGD) within the projected space, whereas GaLore and SOAP leverage the adaptive mechanisms of Adam. Consequently, the analytical techniques developed in \citep{galore-icml-25} are far from sufficient to establish the convergence of the full SOAP algorithm.

In this short note, we address the theoretical behavior of SOAP and establish the following convergence rate
\begin{eqnarray}
\begin{aligned}\label{rate2}
\frac{1}{K}\sum_{k=1}^K\E\left[\left\|\nabla f(\X_k)\right\|_*\right]\leq \bO\left(\sqrt{mn}\sqrt[4]{\frac{\sigma_F^2L\left(f(\X_1)-f^*\right)}{K}}\right),
\end{aligned}
\end{eqnarray} 
where the relevant notations can be found in Section \ref{sec:assumption}. Notably, our analysis extends to a considerably more general variant of SOAP, as presented in Algorithm \ref{gsoap}, which accommodates arbitrary orthogonal projection matrices. In other words,  the projection operators may be generated through any procedure. The only requirement is that, at each iteration, the projection matrices be conditionally independent of the current stochastic gradient. Formally, in the proof for Algorithm \ref{gsoap} we use the identity $\E_k\left[\bP_{k-1}^T\G_k\bQ_{k-1}|\F_{k-1}\right]=\bP_{k-1}^T\E_k\left[\G_k|\F_{k-1}\right]\bQ_{k-1}$. This flexibility is substantial: for instance, the projection matrices may be constructed adaptively from the entire optimization history up to the preceding step—by employing, for example, the eigenvectors of the one-step-delayed preconditioning matrices used in the original SOAP (Algorithm \ref{soap}), or the left and right singular vectors of the momentum or gradient from the previous iteration. Consequently, the theoretical guarantees we establish not only apply directly to the original SOAP algorithm but also shed light on a broader class of projection-based matrix optimizers, including Splus \citep{splus-2025-nips} and ARO \citep{aro-gong-26}, thereby offering a unified analytical framework for understanding the convergence behavior of such methods.

\noindent\begin{minipage}{0.5\textwidth}
\begin{algorithm}[H]
    \caption{SOAP}
    \label{soap}
    \begin{algorithmic}
       \STATE Hyper parameters: $\eta,\theta,\beta,\varepsilon$
       \STATE Initialize $\X_1$, $\M_0=\0$, $\V_0=\0$, $\bP_0$, $\bQ_0$, $\bL_0$, $\bR_0$.
       \FOR{$k=1,2,\cdots,K$}
           \STATE $\G_k=\mbox{GradOracle}(\X_k)$
           \STATE $\M_k=\theta\M_{k-1}+(1-\theta)\G_k$
           \STATE $\G_k'=\bP_{k-1}^T\G_k\bQ_{k-1}$, $\M_k'=\bP_{k-1}^T\M_k\bQ_{k-1}$
           \STATE $\V_k=\beta\V_{k-1}+(1-\beta)(\G_k')^2$
           \STATE $\X_{k+1}=\X_k-\eta\bP_{k-1}\frac{\M_k'}{\sqrt{\V_k+\varepsilon}}\bQ_{k-1}^T$
           \STATE $\bL_k=\beta \bL_{k-1}+(1-\beta)\G_k\G_k^T$
           \STATE $\bR_k=\beta \bR_{k-1}+(1-\beta)\G_k^T\G_k$
           \IF{$k\mod T=0$}
              \STATE $\bP_k=\mbox{Eigenvector}(\bL_k)$
              \STATE $\bQ_k=\mbox{Eigenvector}(\bR_k)$
           \ELSE
              \STATE $\bP_k=\bP_{k-1}$, $\bQ_k=\bQ_{k-1}$
           \ENDIF
       \ENDFOR
    \end{algorithmic}
\end{algorithm}
\end{minipage}
\begin{minipage}{0.5\textwidth}
\begin{algorithm}[H]
    \caption{Generalized SOAP}
    \label{gsoap}
    \begin{algorithmic}
       \STATE Hyper parameters: $\eta,\theta,\beta,\varepsilon$
       \STATE Initialize $\X_1$, $\M_0=\0$, $\V_0=\0$, $\bP_0$, $\bQ_0$.
       \FOR{$k=1,2,\cdots,K$}
           \STATE $\G_k=\mbox{GradOracle}(\X_k)$
           \STATE $\M_k=\theta\M_{k-1}+(1-\theta)\G_k$
           \STATE $\G_k'=\bP_{k-1}^T\G_k\bQ_{k-1}$, $\M_k'=\bP_{k-1}^T\M_k\bQ_{k-1}$
           \STATE $\V_k=\beta\V_{k-1}+(1-\beta)(\G_k')^2$
           \STATE $\X_{k+1}=\X_k-\eta\bP_{k-1}\frac{\M_k'}{\sqrt{\V_k+\varepsilon}}\bQ_{k-1}^T$
           \STATE $\mbox{Generate orthonogal matrices }\bP_k\mbox{ and }\bQ_k$
       \ENDFOR
       \STATE
       \STATE
       \STATE
       \STATE
       \STATE
       \STATE
       \STATE
    \end{algorithmic}
  \end{algorithm}
\end{minipage}

\subsection{Problem Settings, Notations, and Assumptions}\label{sec:assumption}

We study the following nonconvex problem with matrix parameter in this note
\begin{equation}\label{problem}
\min_{\X\in\mathbb R^{m\times n}} f(\X),
\end{equation}
where $f(\X)=\E_{\zeta\in\mathcal{P}}[f(\X;\zeta)]$ and $\zeta$ is the sample drawn from the data distribution $\mathcal{P}$.

We denote vectors by lowercase bold letters and matrices by uppercase bold letters. For vectors, denote $\|\cdot\|$ as the $\ell_2$ Euclidean norm. For matrices, denote $\|\cdot\|_F$, $\|\cdot\|_{op}$, $\|\cdot\|_*$, and $\|\cdot\|_1$ as the Frobenius norm, spectral norm (largest singular value), nuclear norm (sum of singular values), and entrywise $\ell_1$ norm (sum of absolute entries), respectively. Denote $\X_{k,i,j}$, $\X_{k,i,:}$, and $\X_{k,:,j}$ to be the $(i,j)$-th element, $i$-th row, and $j$-th column of matrix $\X$ at iteration $k$, respectively. For matrices $\X$ and $\Y$, denote $\frac{\X}{\Y}$, $\X^2$, and $\sqrt{\X}$ as the element-wise division, square, and square root (different from the matrix power, with a slight abuse of terminology), respectively. Denote $\F_k=\sigma(\zeta_1,\zeta_2,\cdots,\zeta_k)$ to be the sigma field of the stochastic samples up to $k$, denote $\E_{\F_k}[\cdot]$ as the expectation with respect to $\F_k$ and $\E_k[\cdot|\F_{k-1}]$ the conditional expectation with respect to $\zeta_k$ given $\F_{k-1}$. For the sake of brevity, $\E_{\F_K}[\cdot]$ will be denoted as $\E[\cdot]$. 

We make the following assumptions throughout this note:
\begin{enumerate}
\item Smoothness: $\|\nabla f(\Y)-\nabla f(\X)\|_F\leq L\|\Y-\X\|_F, \forall \X,\Y$,
\item Unbiased estimator: $\E_k\left[\G_k\big|\F_{k-1}\right]=\nabla f(\X_k)$,
\item Bounded noise variance: $\E_k\left[\|\G_k-\nabla f(\X_k)\|_F^2\big|\F_{k-1}\right]\leq\sigma_F^2$, \quad$\E_k\left[\|\G_k-\nabla f(\X_k)\|_{op}^2\big|\F_{k-1}\right]\leq\sigma_{op}^2$.
\end{enumerate}

\section{Convergence Rate of SOAP}
We establish the following theorem for the generalized SOAP algorithm presented in Algorithm \ref{gsoap}, which includes the original SOAP as a special case. The rate in (\ref{rate1}) recovers the convergence rate of Adam in the projection space in terms of the entrywise $\ell_1$ norm, and it matches the lower bound for stochastic nonconvex optimization \citep{Arjevani-2023-mp} in the idealized regime where $\|\X\|_1=\Theta(\sqrt{mn})\|\X\|_F$. However, because the $\ell_1$ norm is not unitarily invariant, the orthogonal projection matrices appearing on the left-hand side of (\ref{rate1}) cannot be eliminated. In contrast, the rate in (\ref{rate2}) employs the nuclear norm, which directly measures the gradient magnitude in the primal space. Despite this advantage, the rate in (\ref{rate2}) is $\sqrt{\min\{m,n\}}$ times slower than the rate achieved by AdamW-style Shampoo \citep{shampoo-li-26} and at least $\sqrt{\max\{m,n\}}$ times slower than the theoretical lower bound. In particular, if we choose $\bP_{k-1}$ and $\bQ_{k-1}$ to be the left and right singular vector matrices of $\nabla f(\X_k)$, we obtain $\left\|\bP_{k-1}^T\nabla f(\X_k)\bQ_{k-1}\right\|_1=\left\|\nabla f(\X_k)\right\|_*$ and rate (\ref{rate1}) reduces to rate (\ref{rate2}).
\begin{theorem}\label{main-theorem}
Suppose that Assumptions 1-3 hold. Let $\hat\sigma_F^2=\max\left\{\sigma_F^2,\frac{L\left(f(\X_1)-f^*\right)}{K\gamma^2}\right\}$ with any $\gamma\in(0,1]$, $1-\theta=\sqrt{\frac{L\left(f(\X_1)-f^*\right)}{K\hat\sigma_F^2}}$, $0\leq\beta<1$, $\varepsilon=\sigma_{op}^2$, and $\eta=\sqrt{\frac{\varepsilon\left(f(\X_1)-f^*\right)}{4LK\hat\sigma_F^2}}$. Then for Algorithm \ref{gsoap}, we have
\begin{eqnarray}
\begin{aligned}\label{rate1}
\frac{1}{K}\sum_{k=1}^K\E\left[\left\|\bP_{k-1}^T\nabla f(\X_k)\bQ_{k-1}\right\|_1\right]\leq 10\sqrt{\frac{\hat\sigma_F^2L\left(f(\X_1)-f^*\right)}{K\sigma_{op}^2}}+4\sqrt{mn}\sqrt[4]{\frac{\hat\sigma_F^2L\left(f(\X_1)-f^*\right)}{K}},
\end{aligned}
\end{eqnarray} 
and
\begin{eqnarray}
\begin{aligned}\label{rate2}
\frac{1}{K}\sum_{k=1}^K\E\left[\left\|\nabla f(\X_k)\right\|_*\right]\leq 10\sqrt{\frac{\hat\sigma_F^2L\left(f(\X_1)-f^*\right)}{K\sigma_{op}^2}}+4\sqrt{mn}\sqrt[4]{\frac{\hat\sigma_F^2L\left(f(\X_1)-f^*\right)}{K}}.
\end{aligned}
\end{eqnarray} 
\end{theorem}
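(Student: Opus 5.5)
The plan is to treat the iteration as Adam run in the rotated coordinates $\bP_{k-1}^T(\cdot)\bQ_{k-1}$, with preconditioner $\mathbf{D}_k:=\sqrt{\V_k+\varepsilon}\geq\sqrt{\varepsilon}$ entrywise. We prove (\ref{rate1}) first and then deduce (\ref{rate2}) from it. For (\ref{rate2}), unitary invariance of the nuclear norm gives $\|\nabla f(\X_k)\|_*=\|\bP_{k-1}^T\nabla f(\X_k)\bQ_{k-1}\|_*$. Moreover, any matrix $\mathbf{A}$ satisfies $\|\mathbf{A}\|_*\le\|\mathbf{A}\|_1$, since $\mathbf{A}=\sum_{ij}A_{ij}\e_i\e_j^T$ is a sum of rank-one terms and the nuclear norm is subadditive. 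Throughout we write $\mathbf{g}_k:=\bP_{k-1}^T\nabla f(\X_k)\bQ_{k-1}$ and $\boldsymbol{\delta}_k:=\M_k-\nabla f(\X_k)$. Note that $\|\M_k'-\mathbf{g}_k\|_F=\|\boldsymbol{\delta}_k\|_F$ by orthogonality.

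\emph{Step 1 (descent).} By $L$-smoothness and $\langle\nabla f,\bP\mathbf{A}\bQ^T\rangle=\langle\bP^T\nabla f\bQ,\mathbf{A}\rangle$, we have
\[
f(\X_{k+1})\le f(\X_k)-\eta\left\langle \mathbf{g}_k,\frac{\M_k'}{\mathbf{D}_k}\right\rangle+\frac{L\eta^2}{2}\left\|\frac{\M_k'}{\mathbf{D}_k}\right\|_F^2 .
\]
We split $\M_k'=\mathbf{g}_k+(\M_k'-\mathbf{g}_k)$ and apply Young's inequality entrywise. This gives $-\langle \mathbf{g}_k,\M_k'/\mathbf{D}_k\rangle\le-\frac12\langle \mathbf{g}_k,\mathbf{g}_k/\mathbf{D}_k\rangle+\frac{1}{2\sqrt{\varepsilon}}\|\boldsymbol{\delta}_k\|_F^2$. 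The second-order term is bounded by $\|\M_k\|_F^2/\varepsilon\le 2(\|\nabla f(\X_k)\|_F^2+\|\boldsymbol{\delta}_k\|_F^2)/\varepsilon$. The $\|\nabla f(\X_k)\|_F^2/\varepsilon$ part is absorbed into $\frac14\langle \mathbf{g}_k,\mathbf{g}_k/\mathbf{D}_k\rangle$ when $\eta$ is small. This absorption needs $\mathbf{D}_k\lesssim\sqrt{\varepsilon}/(L\eta)$, which fails in general. So I would instead keep this term, and bound it using $\|\nabla f(\X_k)\|_F\le\|\mathbf{g}_k\|_1$ together with the small factor $L\eta^2/\varepsilon$ after summation. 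Summing over $k$ and taking expectations yields
\[
\sum_k\E\left\langle \mathbf{g}_k,\frac{\mathbf{g}_k}{\mathbf{D}_k}\right\rangle\lesssim\frac{f(\X_1)-f^*}{\eta}+\frac{1}{\sqrt\varepsilon}\sum_k\E\|\boldsymbol{\delta}_k\|_F^2+\frac{L\eta}{\varepsilon}\sum_k\E\|\nabla f(\X_k)\|_F^2 .
\]
This step does not need independence of $\mathbf{D}_k$ from $\G_k$, because Young's inequality is applied pathwise.

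\emph{Step 2 (momentum error).} We use the standard recursion $\boldsymbol{\delta}_k=\theta\boldsymbol{\delta}_{k-1}+\theta(\nabla f(\X_{k-1})-\nabla f(\X_k))+(1-\theta)(\G_k-\nabla f(\X_k))$. The noise term is a martingale difference, by the unbiasedness assumption and the measurability of $\bP_{k-1},\bQ_{k-1}$ with respect to $\F_{k-1}$. This is where conditional independence of the projections is used. Together with $\|\X_k-\X_{k-1}\|_F\le\eta\|\M_{k-1}\|_F/\sqrt\varepsilon$, it gives
\[
\sum_k\E\|\boldsymbol{\delta}_k\|_F^2\lesssim\frac{\E\|\boldsymbol{\delta}_1\|_F^2}{1-\theta}+K(1-\theta)\sigma_F^2+\frac{L^2\eta^2}{\varepsilon(1-\theta)^2}\sum_k\E\|\M_k\|_F^2 .
\]
With the prescribed $\eta$ and $1-\theta$, the coefficient $L^2\eta^2/(\varepsilon(1-\theta)^2)$ is a small constant (about $1/4$). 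Hence the $\|\boldsymbol{\delta}\|^2$ contribution can be moved to the left-hand side, and what remains is a multiple of $\sum\|\nabla f\|_F^2$.

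\emph{Step 3 (from the weighted norm to $\ell_1$).} By Cauchy--Schwarz, $\|\mathbf{g}_k\|_1\le\sqrt{\langle \mathbf{g}_k,\mathbf{g}_k/\mathbf{D}_k\rangle}\sqrt{\|\mathbf{D}_k\|_1}$. Averaging over $k$ and applying Cauchy--Schwarz again in $k$ and in expectation gives
\[
\frac1K\sum_k\E\|\mathbf{g}_k\|_1\le\sqrt{\tfrac1K\textstyle\sum_k\E\langle \mathbf{g}_k,\mathbf{g}_k/\mathbf{D}_k\rangle}\,\sqrt{\tfrac1K\textstyle\sum_k\E\|\mathbf{D}_k\|_1}.
\]
We then use $\|\mathbf{D}_k\|_1\le\sqrt{mn}\sqrt{\|\V_k\|_1+mn\varepsilon}$ and $\|\V_k\|_1=\sum_{s}(1-\beta)\beta^{k-s}\|\G_s\|_F^2$, whose expectation is at most $\max_s(\E\|\nabla f(\X_s)\|_F^2)+\sigma_F^2$. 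The $\sqrt{mn}\,\sigma_F$ piece combines with Step 1 to produce the $\sqrt{mn}\sqrt[4]{\cdot}$ term, and the $\varepsilon$-pieces produce the $\sqrt{\hat\sigma_F^2L\Delta/(K\sigma_{op}^2)}$ term. The parts involving $\|\nabla f\|_F\le\|\mathbf{g}_k\|_1$ yield a quadratic inequality in $S:=\frac1K\sum\E\|\mathbf{g}_k\|_1$, which we solve.

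\emph{Main obstacle.} I expect the main obstacle to be this last bookkeeping: the gradient-norm terms generated in Steps 1 and 3 must be controlled by $S$ itself without losing the rate. The plan is to bound every $\E\|\nabla f(\X_k)\|_F^2$-type quantity by a small multiple of the left-hand side, then solve $S\le a+b\sqrt{S}$ as $S\le 2a+b^2$. The choice $\hat\sigma_F\ge\sqrt{L\Delta/(K\gamma^2)}$ keeps $1-\theta\le1$ and makes the constants $10$ and $4$ attainable.
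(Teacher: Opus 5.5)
\textbf{There is a genuine gap.} It sits exactly in the bookkeeping you flagged as the main obstacle.

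\textbf{Why Steps 1--2 do not close.} In Step~1 you bound the cross term by Young's inequality on the split $\M_k'=\mathbf{g}_k+(\M_k'-\mathbf{g}_k)$. This discards every negative term in $\M_k'$. You then expand $\|\M_k\|_F^2\le 2\|\nabla f(\X_k)\|_F^2+2\|\boldsymbol{\delta}_k\|_F^2$, and Step~2 does the same for $\|\X_k-\X_{k-1}\|_F^2$. With the prescribed parameters, $L^2\eta^2/(\varepsilon(1-\theta)^2)=1/4$. So after moving half of $\sum_k\E\|\boldsymbol{\delta}_k\|_F^2$ to the left, you get $\sum_k\E\|\boldsymbol{\delta}_k\|_F^2\le(\cdots)+\sum_k\E\|\nabla f(\X_k)\|_F^2$. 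Fed into Step~1, the right-hand side then contains at least $\frac{1}{2\sqrt{\varepsilon}}\sum_k\E\|\nabla f(\X_k)\|_F^2$. But the left-hand side is $\frac12\sum_k\E\langle\mathbf{g}_k,\mathbf{g}_k/\mathbf{D}_k\rangle\le\frac{1}{2\sqrt{\varepsilon}}\sum_k\E\|\nabla f(\X_k)\|_F^2$, since $\mathbf{D}_k\ge\sqrt{\varepsilon}$. The inequality is therefore vacuous. A smaller coefficient would not help either, for the reason you yourself noted: $\mathbf{D}_k$ can be much larger than $\sqrt{\varepsilon}$, so such terms cannot be absorbed.

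\textbf{Why the rescue fails.} The bound $\|\nabla f(\X_k)\|_F\le\|\mathbf{g}_k\|_1$ only turns these terms into second moments $\E\|\mathbf{g}_k\|_1^2$. These are controlled neither by $cS$ nor by $S^2$, where $S=\frac1K\sum_k\E\|\mathbf{g}_k\|_1$. Step~3 additionally produces $\max_s\E\|\nabla f(\X_s)\|_F^2$, which no average over iterations controls. Without a bounded-gradient assumption, the inequality $S\le a+b\sqrt{S}$ never materializes.

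\textbf{First missing idea: keep the step in weighted form.} The paper never lets a raw gradient norm appear. It uses the polarization identity $-\langle a,b\rangle=-\frac12\|a\|_F^2-\frac12\|b\|_F^2+\frac12\|a-b\|_F^2$ with $a=\mathbf{g}_k/\sqrt[4]{\V_k+\varepsilon}$ and $b=\M_k'/\sqrt[4]{\V_k+\varepsilon}$. This retains the negative term $-\frac{\eta}{2}\|\M_k'/\sqrt[4]{\V_k+\varepsilon}\|_F^2$. Two quantities are then kept in this same weighted form:
\begin{itemize}
\item the smoothness term, bounded by $\frac{L\eta^2}{2\sqrt{\varepsilon}}\|\M_k'/\sqrt[4]{\V_k+\varepsilon}\|_F^2$;
\item the drift term of the momentum recursion, bounded by $\frac{L^2\eta^2}{(1-\theta)\sqrt{\varepsilon}}\|\M_{k-1}'/\sqrt[4]{\V_{k-1}+\varepsilon}\|_F^2$.
\end{itemize}
Both are absorbed by the Lyapunov function $f(\X_{k+1})-f^*+\frac{\eta\theta}{2\sqrt{\varepsilon}(1-\theta)}\|\boldsymbol{\delta}_k\|_F^2+\frac{\eta}{4}\|\M_k'/\sqrt[4]{\V_k+\varepsilon}\|_F^2$. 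The momentum $\M_k$ is never split into $\nabla f(\X_k)+\boldsymbol{\delta}_k$.

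\textbf{Second missing idea: control the preconditioner by the same weighted quantity.} Instead of bounding $\E\|\V_k\|_1$ through $\E\|\G_s\|_F^2$, the paper introduces the $\F_{k-1}$-measurable surrogate $\widetilde\V_k=\beta\V_{k-1}+(1-\beta)(\mathbf{g}_k^2+\sigma_{op}^2)$.
\begin{itemize}
\item It passes from $\V_k$ to $\widetilde\V_k$ in the descent term by Jensen for the concave map $x\mapsto -1/\sqrt{x+\varepsilon}$. This uses $\E_k[(\G_k')_{ij}^2\mid\F_{k-1}]\le(\mathbf{g}_k)_{ij}^2+\sigma_{op}^2$, which is where the conditional independence of $\bP_{k-1},\bQ_{k-1}$ and the assumption on $\sigma_{op}$ are really used.
\item It then applies the self-bounding split $\sqrt{a+b}\le\sqrt{a}+b/\sqrt{a+b}$ recursively in time (Lemma~\ref{second-moment-lemma}). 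This gives $\sum_k\sum_{i,j}\E\sqrt{\widetilde\V_{k,i,j}+\varepsilon}\le Kmn\sqrt{\sigma_{op}^2+\varepsilon}+2\sum_k\E\|\mathbf{g}_k/\sqrt[4]{\widetilde\V_k+\varepsilon}\|_F^2$.
\end{itemize}
The preconditioner's mass is thus bounded by the same weighted gradient sum that the descent argument already controls, namely by $7\sqrt{K\hat\sigma_F^2L(f(\X_1)-f^*)/\varepsilon}$. Hölder then yields the $\ell_1$ rate directly, with no quadratic inequality in $S$.
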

The proof presented below follows the technical framework of AdamW established in \citep{Li-2025-nips}. While there is some overlap with the arguments provided therein, we include the complete proof here for the sake of completeness.
\begin{proof}
As the gradient is $L$-Lipschitz, we have
\begin{eqnarray}
\begin{aligned}\label{equ1}
&f(\X_{k+1})-f(\X_k)\\
\leq&\<\nabla f(\X_k),\X_{k+1}-\X_k\>+\frac{L}{2}\|\X_{k+1}-\X_k\|_F^2\\
=&-\eta\<\nabla f(\X_k),\bP_{k-1}\frac{\M_k'}{\sqrt{\V_k+\varepsilon}}\bQ_{k-1}^T\>+\frac{L\eta^2}{2}\left\|\bP_{k-1}\frac{\M_k'}{\sqrt{\V_k+\varepsilon}}\bQ_{k-1}^T\right\|_F^2\\
=&-\eta\<\bP_{k-1}^T\nabla f(\X_k)\bQ_{k-1},\frac{\bP_{k-1}^T\M_k\bQ_{k-1}}{\sqrt{\V_k+\varepsilon}}\>+\frac{L\eta^2}{2}\left\|\bP_{k-1}\frac{\bP_{k-1}^T\M_k\bQ_{k-1}}{\sqrt{\V_k+\varepsilon}}\bQ_{k-1}^T\right\|_F^2\\
\overset{a}=&-\eta\<\frac{\bP_{k-1}^T\nabla f(\X_k)\bQ_{k-1}}{\sqrt[4]{\V_k+\varepsilon}},\frac{\bP_{k-1}^T\M_k\bQ_{k-1}}{\sqrt[4]{\V_k+\varepsilon}}\>+\frac{L\eta^2}{2}\left\|\frac{\bP_{k-1}^T\M_k\bQ_{k-1}}{\sqrt{\V_k+\varepsilon}}\right\|_F^2\\
\leq&-\eta\<\frac{\bP_{k-1}^T\nabla f(\X_k)\bQ_{k-1}}{\sqrt[4]{\V_k+\varepsilon}},\frac{\bP_{k-1}^T\M_k\bQ_{k-1}}{\sqrt[4]{\V_k+\varepsilon}}\>+\frac{L\eta^2}{2\sqrt{\varepsilon}}\left\|\frac{\bP_{k-1}^T\M_k\bQ_{k-1}}{\sqrt[4]{\V_k+\varepsilon}}\right\|_F^2\\
=&-\frac{\eta}{2}\left\|\frac{\bP_{k-1}^T\nabla f(\X_k)\bQ_{k-1}}{\sqrt[4]{\V_k+\varepsilon}}\right\|_F^2-\frac{\eta}{2}\left\|\frac{\bP_{k-1}^T\M_k\bQ_{k-1}}{\sqrt[4]{\V_k+\varepsilon}}\right\|_F^2\\
&+\frac{\eta}{2}\left\|\frac{\bP_{k-1}^T\nabla f(\X_k)\bQ_{k-1}}{\sqrt[4]{\V_k+\varepsilon}}-\frac{\bP_{k-1}^T\M_k\bQ_{k-1}}{\sqrt[4]{\V_k+\varepsilon}}\right\|_F^2+\frac{L\eta^2}{2\sqrt{\varepsilon}}\left\|\frac{\bP_{k-1}^T\M_k\bQ_{k-1}}{\sqrt[4]{\V_k+\varepsilon}}\right\|_F^2\\
\overset{b}\leq&-\frac{\eta}{2}\left\|\frac{\bP_{k-1}^T\nabla f(\X_k)\bQ_{k-1}}{\sqrt[4]{\V_k+\varepsilon}}\right\|_F^2-\frac{\eta}{4}\left\|\frac{\bP_{k-1}^T\M_k\bQ_{k-1}}{\sqrt[4]{\V_k+\varepsilon}}\right\|_F^2+\frac{\eta}{2\sqrt{\varepsilon}}\left\|\bP_{k-1}^T\nabla f(\X_k)\bQ_{k-1}-\bP_{k-1}^T\M_k\bQ_{k-1}\right\|_F^2\\
\overset{c}=&-\frac{\eta}{2}\left\|\frac{\bP_{k-1}^T\nabla f(\X_k)\bQ_{k-1}}{\sqrt[4]{\V_k+\varepsilon}}\right\|_F^2-\frac{\eta}{4}\left\|\frac{\bP_{k-1}^T\M_k\bQ_{k-1}}{\sqrt[4]{\V_k+\varepsilon}}\right\|_F^2+\frac{\eta}{2\sqrt{\varepsilon}}\left\|\nabla f(\X_k)-\M_k\right\|_F^2,
\end{aligned}
\end{eqnarray}
where we use the fact that $\|\cdot\|_F$ is a unitarily invariant norm such that $\|\bP\A\bQ^T\|_F=\|\A\|_F$ for orthogonal matrices $\bP$ and $\bQ$ in $\overset{a}=$ and $\overset{c}=$, and let $\eta\leq\frac{\sqrt{\varepsilon}}{2L}$ in $\overset{b}\leq$. Taking expectation on both sides of (\ref{equ1}) with respect to $\zeta_k$ conditioned on $\F_{k-1}$, multiplying both sides of (\ref{GM-dif-equ}) by $\frac{\eta}{2\sqrt{\varepsilon}(1-\theta)}$, and adding to (\ref{equ1}), we have
\begin{eqnarray}
\begin{aligned}\label{equ3}
&\E_k\left[f(\X_{k+1})-f^*+\frac{\eta\theta}{2\sqrt{\varepsilon}(1-\theta)}\left\|\nabla f(\X_k)-\M_k\right\|_F^2+\frac{\eta}{4}\left\|\frac{\bP_{k-1}^T\M_k\bQ_{k-1}}{\sqrt[4]{\V_k+\varepsilon}}\right\|_F^2\big| \F_{k-1}\right]\\
\leq&f(\X_k)-f^*-\frac{\eta}{2}\E_k\left[\left\|\frac{\bP_{k-1}^T\nabla f(\X_k)\bQ_{k-1}}{\sqrt[4]{\V_k+\varepsilon}}\right\|_F^2\big| \F_{k-1}\right]\\
& +\frac{\eta\theta}{2\sqrt{\varepsilon}(1-\theta)}\left\|\M_{k-1}-\nabla f(\X_{k-1})\right\|_F^2+ \frac{L^2\eta^3}{2\varepsilon(1-\theta)^2}\left\|\frac{\bP_{k-2}^T\M_{k-1}\bQ_{k-2}}{\sqrt[4]{\V_{k-1}+\varepsilon}}\right\|_F^2 + \frac{\eta(1-\theta)\sigma_F^2}{2\sqrt{\varepsilon}}\\
\leq&f(\X_k)-f^*-\frac{\eta}{2}\E_k\left[\left\|\frac{\bP_{k-1}^T\nabla f(\X_k)\bQ_{k-1}}{\sqrt[4]{\V_k+\varepsilon}}\right\|_F^2\big| \F_{k-1}\right]\\
& +\frac{\eta\theta}{2\sqrt{\varepsilon}(1-\theta)}\left\|\M_{k-1}-\nabla f(\X_{k-1})\right\|_F^2+ \frac{\eta}{4}\left\|\frac{\bP_{k-2}^T\M_{k-1}\bQ_{k-2}}{\sqrt[4]{\V_{k-1}+\varepsilon}}\right\|_F^2 + \frac{\eta(1-\theta)\sigma_F^2}{2\sqrt{\varepsilon}},
\end{aligned}
\end{eqnarray}
where we let $\eta^2\leq\frac{\varepsilon(1-\theta)^2}{2L^2} $ in the last inequality. Define 
\begin{eqnarray}
\begin{aligned}\label{wide-v-def}
\widetilde \V_k=\beta\V_{k-1}+(1-\beta)\left(\left(\bP_{k-1}^T\nabla f(\X_k)\bQ_{k-1}\right)^2+\sigma_{op}^2\right).
\end{aligned}
\end{eqnarray}
Since $\bP_{k-1}$ and $\bQ_{k-1}$ are not random variables given $\F_{k-1}$, we have
\begin{eqnarray}
\begin{aligned}\label{equ2}
&\E_k\left[\left|\bP_{k-1,:,i}^T\G_k\bQ_{k-1,:,j}\right|^2\big| \F_{k-1}\right]\\
\overset{d}=&\E_k\left[\left|\bP_{k-1,:,i}^T\nabla f(\X_k)\bQ_{k-1,:,j}\right|^2+\left|\bP_{k-1,:,i}^T\left(\nabla f(\X_k)-\G_k\right)\bQ_{k-1,:,j}\right|^2\Big|\F_{k-1}\right]\\
\overset{e}\leq&\E_k\left[\left|\bP_{k-1,:,i}^T\nabla f(\X_k)\bQ_{k-1,:,j}\right|^2+\left\|\nabla f(\X_k)-\G_k\right\|_{op}^2\Big|\F_{k-1}\right]\\
=&\left|\bP_{k-1,:,i}^T\nabla f(\X_k)\bQ_{k-1,:,j}\right|^2+\E_k\left[\left\|\nabla f(\X_k)-\G_k\right\|_{op}^2\Big|\F_{k-1}\right]\\
\overset{f}\leq&\left|\bP_{k-1,:,i}^T\nabla f(\X_k)\bQ_{k-1,:,j}\right|^2+\sigma_{op}^2, 
\end{aligned}
\end{eqnarray}
and
\begin{eqnarray}
\begin{aligned}\notag
\E_k\left[\V_{k,i,j}\big| \F_{k-1}\right]=&\beta\V_{k-1,i,j}+(1-\beta)\E_k\left[\left|\bP_{k-1,:,i}^T\G_k\bQ_{k-1,:,j}\right|^2\big| \F_{k-1}\right]\\
\leq&\beta\V_{k-1,i,j}+(1-\beta)\left(\left|\bP_{k-1,:,i}^T\nabla f(\X_k)\bQ_{k-1,:,j}\right|^2+\sigma_{op}^2\right)\\
=&\widetilde \V_{k,i,j},
\end{aligned}
\end{eqnarray}
where we use Assumption 2 in $\overset{d}=$, $\<\x,\A\y\>\leq\|\x\|\|\A\y\|\leq\|\x\|\|\A\|_{op}\|\y\|$, $\|\bP_{k-1,:,i}\|=1$, and $\|\bQ_{k-1,:,j}\|=1$ in $\overset{e}\leq$, and Assumption 3 in $\overset{f}\leq$. From the concavity of $-\frac{1}{\sqrt{x}}$, we have
\begin{eqnarray}
\begin{aligned}\notag
\E_k\left[-\frac{\left|\bP_{k-1,:,i}^T\nabla f(\X_k)\bQ_{k-1,:,j}\right|^2}{\sqrt{\V_{k,i,j}+\varepsilon}}\big| \F_{k-1}\right]\leq -\frac{\left|\bP_{k-1,:,i}^T\nabla f(\X_k)\bQ_{k-1,:,j}\right|^2}{\sqrt{\E_k\left[\V_{k,i,j}\big| \F_{k-1}\right]+\varepsilon}}\leq -\frac{\left|\bP_{k-1,:,i}^T\nabla f(\X_k)\bQ_{k-1,:,j}\right|^2}{\sqrt{\widetilde \V_{k,i,j}+\varepsilon}}.
\end{aligned}
\end{eqnarray}
Plugging into (\ref{equ1}) and (\ref{equ3}), we have
\begin{eqnarray}
\begin{aligned}\label{equ4}
&\E_k\left[f(\X_{k+1})\big| \F_{k-1}\right]-f(\X_k)\\
\leq&-\frac{\eta}{2}\left\|\frac{\bP_{k-1}^T\nabla f(\X_k)\bQ_{k-1}}{\sqrt[4]{\widetilde\V_k+\varepsilon}}\right\|_F^2-\frac{\eta}{4}\E_k\left[\left\|\frac{\bP_{k-1}^T\M_k\bQ_{k-1}}{\sqrt[4]{\V_k+\varepsilon}}\right\|_F^2\big| \F_{k-1}\right]+\frac{\eta}{2\sqrt{\varepsilon}}\E_k\left[\left\|\nabla f(\X_k)-\M_k\right\|_F^2\big| \F_{k-1}\right]
\end{aligned}
\end{eqnarray}
and
\begin{eqnarray}
\begin{aligned}\label{equ5}
&\E_k\left[f(\X_{k+1})-f^*+\frac{\eta\theta}{2\sqrt{\varepsilon}(1-\theta)}\left\|\nabla f(\X_k)-\M_k\right\|_F^2+\frac{\eta}{4}\left\|\frac{\bP_{k-1}^T\M_k\bQ_{k-1}}{\sqrt[4]{\V_k+\varepsilon}}\right\|_F^2\big| \F_{k-1}\right]\\
\leq&f(\X_k)-f^*-\frac{\eta}{2}\left\|\frac{\bP_{k-1}^T\nabla f(\X_k)\bQ_{k-1}}{\sqrt[4]{\widetilde\V_k+\varepsilon}}\right\|_F^2\\
& +\frac{\eta\theta}{2\sqrt{\varepsilon}(1-\theta)}\left\|\M_{k-1}-\nabla f(\X_{k-1})\right\|_F^2+ \frac{\eta}{4}\left\|\frac{\bP_{k-2}^T\M_{k-1}\bQ_{k-2}}{\sqrt[4]{\V_{k-1}+\varepsilon}}\right\|_F^2 + \frac{\eta(1-\theta)\sigma_F^2}{2\sqrt{\varepsilon}}.
\end{aligned}
\end{eqnarray}
Taking expectation with respect to $\F_{k-1}$ and summing (\ref{equ4}) with $k=1$ and (\ref{equ5}) over $k=2,3,\cdots,K$, we have
\begin{eqnarray}
\begin{aligned}\label{equ6}
&\E_{\F_K}\left[f(\X_{K+1})-f^*+\frac{\eta\theta}{2\sqrt{\varepsilon}(1-\theta)}\left\|\nabla f(\X_K)-\M_K\right\|_F^2+\frac{\eta}{4}\left\|\frac{\bP_{K-1}^T\M_K\bQ_{K-1}}{\sqrt[4]{\V_K+\varepsilon}}\right\|_F^2\right]\\
\leq&f(\X_1)-f^*-\frac{\eta}{2}\sum_{k=1}^K\E_{\F_{k-1}}\left[\left\|\frac{\bP_{k-1}^T\nabla f(\X_k)\bQ_{k-1}}{\sqrt[4]{\widetilde\V_k+\varepsilon}}\right\|_F^2\right]\\
& +\left(\frac{\eta}{2\sqrt{\varepsilon}}+\frac{\eta\theta}{2\sqrt{\varepsilon}(1-\theta)}\right)\E_{\F_1}\left[\left\|\M_1-\nabla f(\X_1)\right\|_F^2\right]+ \frac{(K-1)\eta(1-\theta)\sigma_F^2}{2\sqrt{\varepsilon}}\\
=&f(\X_1)\hspace*{-0.05cm}-\hspace*{-0.05cm}f^*\hspace*{-0.05cm}-\hspace*{-0.05cm}\frac{\eta}{2}\hspace*{-0.05cm}\sum_{k=1}^K\hspace*{-0.05cm}\E_{\F_{k-1}}\hspace*{-0.15cm}\left[\left\|\frac{\bP_{k-1}^T\nabla f(\X_k)\bQ_{k-1}}{\sqrt[4]{\widetilde\V_k+\varepsilon}}\right\|_F^2\right] \hspace*{-0.1cm}+\hspace*{-0.05cm}\frac{\eta}{2\sqrt{\varepsilon}(1\hspace*{-0.05cm}-\hspace*{-0.05cm}\theta)}\E_{\F_1}\hspace*{-0.15cm}\left[\left\|\M_1\hspace*{-0.1cm}-\hspace*{-0.1cm}\nabla f(\X_1)\right\|_F^2\right]\hspace*{-0.05cm}+\hspace*{-0.05cm} \frac{(\hspace*{-0.05cm}K\hspace*{-0.05cm}-\hspace*{-0.05cm}1)\eta(1\hspace*{-0.05cm}-\hspace*{-0.05cm}\theta)\sigma_F^2}{2\sqrt{\varepsilon}}.\hspace*{-0.65cm}
\end{aligned}
\end{eqnarray}
As the gradient is $L$-Lipschitz, we have
\begin{eqnarray}
\begin{aligned}\notag
&f^*\leq f\left(\X-\frac{1}{L}\nabla f(\X)\right)\leq f(\X)-\frac{1}{L}\<\nabla f(\X),\nabla f(\X)\>+\frac{L}{2}\left\|\frac{1}{L}\nabla f(\X)\right\|_F^2=f(\X)-\frac{1}{2L}\left\|\nabla f(\X)\right\|_F^2.
\end{aligned}
\end{eqnarray}
Using the recursion of $\M_1$ and $\M_0=\0$, we have
\begin{eqnarray}
\begin{aligned}\notag
\E_{\F_1}\left[\left\|\nabla f(\X_1)-\M_1\right\|_F^2\right]=&\E_{\F_1}\left[\left\|\theta\nabla f(\X_1)+(1-\theta)\left(\nabla f(\X_1)-\G_1\right)\right\|_F^2\right]\\
=&\theta^2\left\|\nabla f(\X_1)\right\|_F^2+(1-\theta)^2\E_{\F_1}\left[\left\|\nabla f(\X_1)-\G_1\right\|_F^2\right]\\
\leq& 2L\left(f(\X_1)-f^*\right) + (1-\theta)^2\sigma_F^2.
\end{aligned}
\end{eqnarray}
Plugging into (\ref{equ6}), we have
\begin{eqnarray}
\begin{aligned}\notag
&\E_{\F_K}\left[f(\X_{K+1})-f^*+\frac{\eta\theta}{2\sqrt{\varepsilon}(1-\theta)}\left\|\nabla f(\X_K)-\M_K\right\|_F^2+\frac{\eta}{4}\left\|\frac{\bP_{K-1}^T\M_K\bQ_{K-1}}{\sqrt[4]{\V_K+\varepsilon}}\right\|_F^2\right]\\
\leq&f(\X_1)-f^*-\frac{\eta}{2}\sum_{k=1}^K\E_{\F_{k-1}}\left[\left\|\frac{\bP_{k-1}^T\nabla f(\X_k)\bQ_{k-1}}{\sqrt[4]{\widetilde\V_k+\varepsilon}}\right\|_F^2\right] +\frac{L\eta}{\sqrt{\varepsilon}(1-\theta)}\left(f(\X_1)-f^*\right)+ \frac{K\eta(1-\theta)\sigma_F^2}{2\sqrt{\varepsilon}}\\
\leq&f(\X_1)-f^*-\frac{\eta}{2}\sum_{k=1}^K\E_{\F_{k-1}}\left[\left\|\frac{\bP_{k-1}^T\nabla f(\X_k)\bQ_{k-1}}{\sqrt[4]{\widetilde\V_k+\varepsilon}}\right\|_F^2\right] +\frac{L\eta}{\sqrt{\varepsilon}(1-\theta)}\left(f(\X_1)-f^*\right)+ \frac{K\eta(1-\theta)\hat\sigma_F^2}{2\sqrt{\varepsilon}},
\end{aligned}
\end{eqnarray}
where we denote $\hat\sigma_F^2=\max\left\{\sigma_F^2,\frac{L\left(f(\X_1)-f^*\right)}{K\gamma^2}\right\}$ with any $\gamma\in(0,1]$. So we have
\begin{eqnarray}
\begin{aligned}\notag
\sum_{k=1}^K\E_{\F_{k-1}}\left[\left\|\frac{\bP_{k-1}^T\nabla f(\X_k)\bQ_{k-1}}{\sqrt[4]{\widetilde\V_k+\varepsilon}}\right\|_F^2\right]\leq&\frac{2}{\eta}\left(f(\X_1)-f^*\right) +\frac{2L}{\sqrt{\varepsilon}(1-\theta)}\left(f(\X_1)-f^*\right)+ \frac{K(1-\theta)\hat\sigma_F^2}{\sqrt{\varepsilon}}\\
=&7\sqrt{\frac{K\hat\sigma_F^2L\left(f(\X_1)-f^*\right)}{\varepsilon}}
\end{aligned}
\end{eqnarray}
by letting $1-\theta=\sqrt{\frac{L\left(f(\X_1)-f^*\right)}{K\hat\sigma_F^2}}$ and $\eta=\sqrt{\frac{\varepsilon\left(f(\X_1)-f^*\right)}{4LK\hat\sigma_F^2}}$, which also satisfies $\eta\leq\frac{\sqrt{\varepsilon}}{2L}$ and $\eta^2\leq\frac{\varepsilon(1-\theta)^2}{4L^2}\leq\frac{\varepsilon(1-\theta)^2}{2L^2}$. Using Holder's inequality and Lemma \ref{second-moment-lemma}, we have
\begin{eqnarray}
\begin{aligned}\notag
&\left(\sum_{k=1}^K\E_{\F_{k-1}}\left[\left\|\bP_{k-1}^T\nabla f(\X_k)\bQ_{k-1}\right\|_1\right]\right)^2=\left(\sum_{k=1}^K\sum_{i=1}^m\sum_{j=1}^n\E_{\F_{k-1}}\left[\left|\bP_{k-1,:,i}^T\nabla f(\X_k)\bQ_{k-1,:,j}\right|\right]\right)^2\\
\leq&\left(\sum_{k=1}^K\sum_{i=1}^m\sum_{j=1}^n\E_{\F_{k-1}}\left[\frac{\left|\bP_{k-1,:,i}^T\nabla f(\X_k)\bQ_{k-1,:,j}\right|^2}{\sqrt{\widetilde\V_{k,i,j}+\varepsilon}}\right]\right)\left(\sum_{k=1}^K\sum_{i=1}^m\sum_{j=1}^n\E_{\F_{k-1}}\left[\sqrt{\widetilde\V_{k,i,j}+\varepsilon}\right]\right)\\
\leq&\left(\sum_{k=1}^K\E_{\F_{k-1}}\left[\left\|\frac{\bP_{k-1}^T\nabla f(\X_t)\bQ_{k-1}}{\sqrt[4]{\widetilde\V_k+\varepsilon}}\right\|_F^2\right]\right)\left(Kmn\sqrt{\sigma_{op}^2+\varepsilon}+2\sum_{k=1}^K\E_{\F_{k-1}}\left[\left\|\frac{\bP_{k-1}^T\nabla f(\X_k)\bQ_{k-1}}{\sqrt[4]{\widetilde\V_k+\varepsilon}}\right\|_F^2\right]\right)\\
\leq&\left(7\sqrt{\frac{K\hat\sigma_F^2L\left(f(\X_1)-f^*\right)}{\varepsilon}}\right)\left(Kmn\sqrt{\sigma_{op}^2+\varepsilon}+14\sqrt{\frac{K\hat\sigma_F^2L\left(f(\X_1)-f^*\right)}{\varepsilon}}\right)
\end{aligned}
\end{eqnarray}
and
\begin{eqnarray}
\begin{aligned}\notag
\frac{1}{K}\sum_{k=1}^K\E_{\F_{k-1}}\left[\left\|\bP_{k-1}^T\nabla f(\X_k)\bQ_{k-1}\right\|_1\right]
\leq&10\sqrt{\frac{\hat\sigma_F^2L\left(f(\X_1)-f^*\right)}{K\varepsilon}}+\sqrt{7mn}\sqrt[4]{\left(\sigma_{op}^2+\varepsilon\right)\frac{\hat\sigma_F^2L\left(f(\X_1)-f^*\right)}{K\varepsilon}}\\
\leq&10\sqrt{\frac{\hat\sigma_F^2L\left(f(\X_1)-f^*\right)}{K\sigma_{op}^2}}+4\sqrt{mn}\sqrt[4]{\frac{\hat\sigma_F^2L\left(f(\X_1)-f^*\right)}{K}}
\end{aligned}
\end{eqnarray}
by letting $\varepsilon=\sigma_{op}^2$. From Lemma \ref{nuclear-l1-norms}, we have
\begin{eqnarray}
\begin{aligned}\notag
\left\|\bP_{k-1}^T\nabla f(\X_k)\bQ_{k-1}\right\|_1\geq \left\|\bP_{k-1}^T\nabla f(\X_k)\bQ_{k-1}\right\|_*=\left\|\nabla f(\X_k)\right\|_*,
\end{aligned}
\end{eqnarray}
where we use the fact that nuclear norm is a unitarily invariant norm in the last equation. 
\end{proof}

The following lemma is closely similar to \citep[Lemma 4]{Li-2025-nips} and we list the proof here only for the sake of completeness. 
\begin{lemma}\label{momentum-bound-lemma}
Suppose that Assumptions 1-3 hold. Then for Algorithm \ref{gsoap}, we have
\begin{eqnarray}
\begin{aligned}\label{GM-dif-equ}
&\E_k\left[\left\|\nabla f(\X_k)-\M_k\right\|_F^2\big|\F_{k-1}\right]\\
\leq&\theta\left\|\M_{k-1}-\nabla f(\X_{k-1})\right\|_F^2 + \frac{L^2\eta^2}{(1-\theta)\sqrt{\varepsilon}}\left\|\frac{\bP_{k-2}^T\M_{k-1}\bQ_{k-2}}{\sqrt[4]{\V_{k-1}+\varepsilon}}\right\|_F^2 + (1-\theta)^2\sigma_F^2.
\end{aligned}
\end{eqnarray}
\end{lemma}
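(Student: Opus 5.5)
Starting from the momentum recursion $\M_k=\theta\M_{k-1}+(1-\theta)\G_k$, I will write
\begin{eqnarray}
\begin{aligned}\notag
\M_k-\nabla f(\X_k)=\theta\left(\M_{k-1}-\nabla f(\X_{k-1})\right)+\theta\left(\nabla f(\X_{k-1})-\nabla f(\X_k)\right)+(1-\theta)\left(\G_k-\nabla f(\X_k)\right).
\end{aligned}
\end{eqnarray}
The first two terms are $\F_{k-1}$-measurable, since $\X_k$ is determined by $\zeta_1,\dots,\zeta_{k-1}$. The last term has zero conditional mean by Assumption 2. I will take $\E_k[\cdot|\F_{k-1}]$ of the squared Frobenius norm. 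The cross term then vanishes, and the noise contributes at most $(1-\theta)^2\sigma_F^2$ by Assumption 3. This accounts for the third term of (\ref{GM-dif-equ}).

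For the deterministic part $\theta\,\mathbf{a}+\theta\,\mathbf{b}$, where $\mathbf{a}=\M_{k-1}-\nabla f(\X_{k-1})$ and $\mathbf{b}=\nabla f(\X_{k-1})-\nabla f(\X_k)$, I will use the weighted Young inequality $\|\mathbf{a}+\mathbf{b}\|_F^2\leq(1+\alpha)\|\mathbf{a}\|_F^2+(1+1/\alpha)\|\mathbf{b}\|_F^2$ with $\alpha=\frac{1-\theta}{\theta}$. This gives
\begin{eqnarray}
\begin{aligned}\notag
\theta^2\|\mathbf{a}+\mathbf{b}\|_F^2\leq\theta\|\mathbf{a}\|_F^2+\frac{\theta^2}{1-\theta}\|\mathbf{b}\|_F^2\leq\theta\|\mathbf{a}\|_F^2+\frac{1}{1-\theta}\|\mathbf{b}\|_F^2 .
\end{aligned}
\end{eqnarray}
By Assumption 1, $\|\mathbf{b}\|_F^2\leq L^2\|\X_k-\X_{k-1}\|_F^2$.

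It remains to bound the step $\X_k-\X_{k-1}=-\eta\bP_{k-2}\frac{\M_{k-1}'}{\sqrt{\V_{k-1}+\varepsilon}}\bQ_{k-2}^T$. Unitary invariance of the Frobenius norm removes $\bP_{k-2}$ and $\bQ_{k-2}$. Entrywise, $\frac{1}{\sqrt{\V_{k-1}+\varepsilon}}\leq\frac{1}{\sqrt{\varepsilon}}\cdot\frac{1}{\sqrt[4]{\V_{k-1}+\varepsilon}}\cdot\sqrt[4]{\varepsilon}\cdot\frac{1}{\sqrt[4]{\varepsilon}}$, or more simply, $\frac{x^2}{\V+\varepsilon}\leq\frac{1}{\sqrt{\varepsilon}}\frac{x^2}{\sqrt{\V+\varepsilon}}$. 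Together these give
\begin{eqnarray}
\begin{aligned}\notag
\|\X_k-\X_{k-1}\|_F^2\leq\frac{\eta^2}{\sqrt{\varepsilon}}\left\|\frac{\bP_{k-2}^T\M_{k-1}\bQ_{k-2}}{\sqrt[4]{\V_{k-1}+\varepsilon}}\right\|_F^2 .
\end{aligned}
\end{eqnarray}
This yields the middle term of (\ref{GM-dif-equ}).

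For $k=1$ the statement is only used formally, with $\M_0=\0$. There I will adopt the convention that the $\X_0$ terms are absent, or just note that the lemma is applied only for $k\geq2$ in (\ref{equ5}).

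I do not expect a genuine obstacle. The only point needing care is choosing the Young weight so that the coefficient on $\|\mathbf{a}\|_F^2$ is exactly $\theta$, and checking that the cross term with the noise vanishes: this holds because $\M_{k-1}$ and $\X_k$ are $\F_{k-1}$-measurable.
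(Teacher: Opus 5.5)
Your proof is correct and follows the paper's argument essentially line for line: the same decomposition of $\M_k-\nabla f(\X_k)$ into two $\F_{k-1}$-measurable terms plus $(1-\theta)$ times zero-mean noise, the vanishing cross term, Young's inequality with weight $\frac{1-\theta}{\theta}$ followed by $\theta^2\leq 1$, then Lipschitz continuity, unitary invariance, and the entrywise bound $\frac{1}{v+\varepsilon}\leq\frac{1}{\sqrt{\varepsilon}\sqrt{v+\varepsilon}}$ for $v\geq 0$. One small slip: your first entrywise inequality, with right-hand side $\frac{1}{\sqrt{\varepsilon}}\cdot\frac{1}{\sqrt[4]{v+\varepsilon}}$, fails in general because it would require $v+\varepsilon\geq\varepsilon^2$; the ``more simply'' version you give right after is correct, and it is all the step needs.
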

\begin{proof}
Denoting $\Gamma_k=\G_k-\nabla f(\X_k)$, we have $\E_k\left[\Gamma_k\big|\F_{k-1}\right]=0$ and $\E_k\left[\left\|\Gamma_k\right\|_F^2\big|\F_{k-1}\right]\leq\sigma_F^2$. From the update of $\M_k$, we have
\begin{eqnarray}
\begin{aligned}\notag
\M_k-\nabla f(\X_k)=& \theta \M_{k-1} + (1-\theta)\G_k - \nabla f(\X_k)\\
=& \theta \left(\M_{k-1}-\nabla f(\X_{k-1})\right) + (1-\theta)\left(\nabla f(\X_k)+\Gamma_k\right) - \nabla f(\X_k) + \theta\nabla f(\X_{k-1})\\
=& \theta \left(\M_{k-1}-\nabla f(\X_{k-1})\right) + (1-\theta)\Gamma_k - \theta\left(\nabla f(\X_k)-\nabla f(\X_{k-1})\right)
\end{aligned}
\end{eqnarray}
and
\begin{eqnarray}
\begin{aligned}\notag
&\E_k\left[\left\|\nabla f(\X_k)-\M_k\right\|_F^2\big|\F_{k-1}\right]\\
=&\left\|\theta \left(\M_{k-1}-\nabla f(\X_{k-1})\right) - \theta\left(\nabla f(\X_k)-\nabla f(\X_{k-1})\right)\right\|_F^2 + (1-\theta)^2\E_k\left[\left\|\Gamma_k\right\|_F^2\big|\F_{k-1}\right]\\
\leq& \theta^2\hspace*{-0.05cm}\left(\hspace*{-0.05cm}1\hspace*{-0.05cm}+\hspace*{-0.05cm}\frac{1\hspace*{-0.05cm}-\hspace*{-0.05cm}\theta}{\theta}\right)\hspace*{-0.05cm}\left\|\M_{k-1}\hspace*{-0.05cm}-\hspace*{-0.05cm}\nabla f(\X_{k-1})\right\|_F^2 \hspace*{-0.05cm}+\hspace*{-0.05cm} \theta^2\hspace*{-0.05cm}\left(\hspace*{-0.05cm}1\hspace*{-0.05cm}+\hspace*{-0.05cm}\frac{\theta}{1\hspace*{-0.05cm}-\hspace*{-0.05cm}\theta}\right)\hspace*{-0.05cm}\left\|\nabla f(\X_k)\hspace*{-0.05cm}-\hspace*{-0.05cm}\nabla f(\X_{k-1})\right\|_F^2 \hspace*{-0.05cm}+\hspace*{-0.05cm} (1\hspace*{-0.05cm}-\hspace*{-0.05cm}\theta)^2\E_k\hspace*{-0.05cm}\left[\left\|\Gamma_k\right\|_F^2\big|\F_{k-1}\right]\\
\leq& \theta\left\|\M_{k-1}-\nabla f(\X_{k-1})\right\|_F^2 + \frac{1}{1-\theta}\left\|\nabla f(\X_k)-\nabla f(\X_{k-1})\right\|_F^2 + (1-\theta)^2\sigma_F^2\\
\leq& \theta\left\|\M_{k-1}-\nabla f(\X_{k-1})\right\|_F^2 + \frac{L^2}{1-\theta}\left\|\X_k-\X_{k-1}\right\|_F^2 + (1-\theta)^2\sigma_F^2\\
=& \theta\left\|\M_{k-1}-\nabla f(\X_{k-1})\right\|_F^2 + \frac{L^2\eta^2}{1-\theta}\left\|\bP_{k-2}\frac{\bP_{k-2}^T\M_{k-1}\bQ_{k-2}}{\sqrt{\V_{k-1}+\varepsilon}}\bQ_{k-2}^T\right\|_F^2 + (1-\theta)^2\sigma_F^2\\
=& \theta\left\|\M_{k-1}-\nabla f(\X_{k-1})\right\|_F^2 + \frac{L^2\eta^2}{1-\theta}\left\|\frac{\bP_{k-2}^T\M_{k-1}\bQ_{k-2}}{\sqrt{\V_{k-1}+\varepsilon}}\right\|_F^2 + (1-\theta)^2\sigma_F^2\\
\leq& \theta\left\|\M_{k-1}-\nabla f(\X_{k-1})\right\|_F^2 + \frac{L^2\eta^2}{(1-\theta)\sqrt{\varepsilon}}\left\|\frac{\bP_{k-2}^T\M_{k-1}\bQ_{k-2}}{\sqrt[4]{\V_{k-1}+\varepsilon}}\right\|_F^2 + (1-\theta)^2\sigma_F^2.
\end{aligned}
\end{eqnarray}
\end{proof} 

The following lemma is adapted from \citep[Lemma 8]{lihuan-rmsprop-2024} and \citep[Lemma 5]{Li-2025-nips}, with modifications introduced to accommodate the projection-based setting of the SOAP algorithm.
\begin{lemma}\label{second-moment-lemma}
Suppose that Assumptions 2-3 hold. Let $0\leq\beta< 1$ and $\V_0=\0$. Then we have
\begin{eqnarray}
\begin{aligned}\notag
\sum_{k=1}^K\sum_{i=1}^m\sum_{j=1}^n \E_{\F_{k-1}}\left[\sqrt{\widetilde\V_{k,i,j}+\varepsilon}\right]\leq Kmn\sqrt{\sigma_{op}^2+\varepsilon}+2\sum_{t=1}^K\E_{\F_{t-1}}\left[\left\|\frac{\bP_{t-1}^T\nabla f(\X_t)\bQ_{t-1}}{\sqrt[4]{\widetilde\V_t+\varepsilon}}\right\|_F^2\right].
\end{aligned}
\end{eqnarray}
\end{lemma}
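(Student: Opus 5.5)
The plan is to prove a one-step recursion for the coordinatewise quantity $W_{k,i,j}:=\sqrt{\widetilde\V_{k,i,j}+\varepsilon}$ and then sum it over $k$ as a geometric-type series. Throughout, fix $(i,j)$ and write $a_{k}:=|\bP_{k-1,:,i}^T\nabla f(\X_k)\bQ_{k-1,:,j}|$, which is $\F_{k-1}$-measurable, and $c:=\sigma_{op}^2+\varepsilon$. By definition (\ref{wide-v-def}),
\[
\widetilde\V_{k,i,j}+\varepsilon=\beta\V_{k-1,i,j}+(1-\beta)a_k^2+(1-\beta)\sigma_{op}^2+\varepsilon .
\]
The elementary inequality $\sqrt{A+B}\le\sqrt{A}+\frac{B}{\sqrt{A+B}}$ with $B=(1-\beta)a_k^2$ isolates exactly the term $\frac{(1-\beta)a_k^2}{W_{k,i,j}}$ that appears on the right-hand side of the lemma. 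This gives
\[
W_{k,i,j}\le\sqrt{\beta\V_{k-1,i,j}+(1-\beta)\sigma_{op}^2+\varepsilon}+\frac{(1-\beta)a_k^2}{W_{k,i,j}}.
\]

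The next step handles the random term $\V_{k-1,i,j}$. Taking $\E_{k-1}[\cdot|\F_{k-2}]$ and using the concavity of $\sqrt{\cdot}$ (Jensen), I would replace $\V_{k-1,i,j}$ by its conditional expectation. The computation preceding the concavity step in the proof of Theorem \ref{main-theorem}, namely (\ref{equ2}), already shows $\E_{k-1}[\V_{k-1,i,j}|\F_{k-2}]\le\widetilde\V_{k-1,i,j}$. That computation relies only on $\bP_{k-2},\bQ_{k-2}$ being fixed given $\F_{k-2}$, together with Assumptions 2--3. Hence
\[
\E\big[\sqrt{\beta\V_{k-1,i,j}+(1-\beta)\sigma_{op}^2+\varepsilon}\big]\le\E\big[\sqrt{\beta(\widetilde\V_{k-1,i,j}+\varepsilon)+(1-\beta)c}\big].
\]
Here I used $\beta\widetilde\V_{k-1}+(1-\beta)\sigma_{op}^2+\varepsilon\le\beta(\widetilde\V_{k-1}+\varepsilon)+(1-\beta)c$, which holds as an identity. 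Then concavity of $\sqrt{\cdot}$ once more gives $\sqrt{\beta x+(1-\beta)c}\le\beta\sqrt x+(1-\beta)\sqrt c$ only in the wrong direction. So I would instead use $\sqrt{\beta x+(1-\beta)c}\le\sqrt{\beta}\sqrt{x}+\sqrt{(1-\beta)c}$, or, more tightly, the bound $\sqrt{\beta x+(1-\beta)c}\le\sqrt\beta\sqrt x+(1-\sqrt\beta)\sqrt c+\frac{(1-\beta)c-(1-\sqrt\beta)^2c-2\sqrt\beta(1-\sqrt\beta)\sqrt{xc}}{\cdots}$. The aim is an inequality of the form
\[
\E[W_{k,i,j}]\le\sqrt\beta\,\E[W_{k-1,i,j}]+(1-\sqrt\beta)\sqrt c+(1-\beta)\E\Big[\frac{a_k^2}{W_{k,i,j}}\Big],
\]
with the convention $W_{0}=\sqrt{\varepsilon}\le\sqrt c$ coming from $\V_0=\0$.

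Summing this recursion over $k=1,\dots,K$ and moving $\sqrt\beta\sum_k\E[W_k]$ to the left yields $(1-\sqrt\beta)\sum_k\E[W_{k,i,j}]\le K(1-\sqrt\beta)\sqrt c+(1-\beta)\sum_k\E[a_k^2/W_{k,i,j}]$. Dividing by $1-\sqrt\beta$ and using $\frac{1-\beta}{1-\sqrt\beta}=1+\sqrt\beta\le2$ produces exactly the constants $Kmn\sqrt{\sigma_{op}^2+\varepsilon}$ and $2$ after summing over $(i,j)$. The final step recognizes $\sum_{i,j}a_k^2/W_{k,i,j}$ as $\|\bP_{k-1}^T\nabla f(\X_k)\bQ_{k-1}/\sqrt[4]{\widetilde\V_k+\varepsilon}\|_F^2$.

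The main obstacle is the middle step: getting the contraction factor $\sqrt\beta$ in front of $W_{k-1}$ while keeping the additive constant at $(1-\sqrt\beta)\sqrt c$ rather than $\sqrt{1-\beta}\sqrt c$, since the latter would spoil the constant $1$ after division by $1-\sqrt\beta$. If the clean inequality $\sqrt{\beta x+(1-\beta)c}\le\sqrt\beta\sqrt x+(1-\sqrt\beta)\sqrt c$ fails (it does for $x<c$), I would first try to exploit $x=\widetilde\V_{k-1}+\varepsilon$ and argue separately on whether $x\ge c$. As a fallback, I would unroll $\V_{k-1}$ fully as $\sum_{t<k}(1-\beta)\beta^{k-1-t}(\G_t')_{i,j}^2$, bound the $\sigma_{op}^2$ contributions by $c(1-\beta^{k})\le c$ directly, and distribute the $a_t^2$ contributions using $\widetilde\V_k\ge\beta^{k-t}(1-\beta)a_t^2$-type comparisons.
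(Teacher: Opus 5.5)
Your opening split $W_{k,i,j}\le\sqrt{\beta\V_{k-1,i,j}+(1-\beta)\sigma_{op}^2+\varepsilon}+(1-\beta)a_k^2/W_{k,i,j}$ is fine. So are the Jensen step via $\E_{k-1}[\V_{k-1,i,j}|\F_{k-2}]\le\widetilde\V_{k-1,i,j}$ and the final summation. The gap is that the one-step recursion everything hinges on,
\[
\E[W_{k,i,j}]\le\sqrt\beta\,\E[W_{k-1,i,j}]+(1-\sqrt\beta)\sqrt c+(1-\beta)\E\big[a_k^2/W_{k,i,j}\big],
\]
is false, not merely hard to prove. Take $f$ constant, so every $a_k=0$, and take $\G_1=\xi\sigma_{op}\bP_0\e_i\e_j^T\bQ_0^T$ with $\xi=\pm1$ equiprobable. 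This satisfies Assumptions 2--3 with $\sigma_F=\sigma_{op}$. Then, deterministically, $W_{1,i,j}^2=(1-\beta)\sigma_{op}^2+\varepsilon$ and $W_{2,i,j}^2=(1-\beta^2)\sigma_{op}^2+\varepsilon$. For $\beta=0.99$ and $\varepsilon=\sigma_{op}^2$ this gives $W_{2,i,j}\approx1.0099\,\sigma_{op}$, while the right-hand side is only $\approx1.0070\,\sigma_{op}$. This is exactly your regime $x=W_{1,i,j}^2<c$. Every inequality you used before that point is an equality in this example, so no case analysis on $x\ge c$ can repair it: a contraction onto $W_{k-1,i,j}$ with additive constant $(1-\sqrt\beta)\sqrt c$ simply does not hold.

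The missing idea, which is what the paper does and what your fallback gestures at, is to never recurse on $W_{k-1,i,j}$. Instead, for each fixed $k$, recurse on the look-back quantities $U_{k,t}:=\sqrt{\beta^t\V_{k-t,i,j}+(1-\beta^t)\sigma_{op}^2+\varepsilon}$. These keep all noise and $\varepsilon$ contributions inside a single square root, and $U_{k,1}$ is exactly your first term. The step from $t$ to $t+1$ has three parts:
\begin{enumerate}
\item Expand $\V_{k-t}=\beta\V_{k-t-1}+(1-\beta)(\G_{k-t}')^2$.
\item Apply conditional Jensen given $\F_{k-t-1}$ together with (\ref{equ2}).
\item Immediately split off $\beta^t(1-\beta)a_{k-t}^2$, before conditioning further back.
\end{enumerate}
The split in the last part must come first because $a_{k-t}$ is only $\F_{k-t-1}$-measurable; this is why a literal ``unroll $\V_{k-1}$ fully, then take expectations'' does not work. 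The result is
\[
\E[U_{k,t}]\le\E[U_{k,t+1}]+\sqrt{\beta^t}(1-\beta)\,\E\big[a_{k-t}^2/W_{k-t,i,j}\big].
\]
The factor $\sqrt{\beta^t}$ comes from the denominator bound $\beta^{t+1}\V_{k-t-1,i,j}+\beta^t(1-\beta)a_{k-t}^2+(1-\beta^{t+1})\sigma_{op}^2+\varepsilon\ge\beta^t(\widetilde\V_{k-t,i,j}+\varepsilon)$. So the whole of $\widetilde\V_{k-t,i,j}+\varepsilon$, noise and $\varepsilon$ included, must sit under the denominator. A comparison of the form $\widetilde\V_k\ge\beta^{k-t}(1-\beta)a_t^2$, as in your fallback, would only produce terms like $\sqrt{\beta^{k-t}(1-\beta)}\,|a_t|$, not $a_t^2/W_{t,i,j}$. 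Unrolling down to $t=k$ leaves the single constant $U_{k,k}=\sqrt{(1-\beta^k)\sigma_{op}^2+\varepsilon}\le\sqrt c$ for each $k$, with no geometric accumulation of constants. Your summation with $\frac{1-\beta}{1-\sqrt\beta}=1+\sqrt\beta\le2$ then finishes the proof.
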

\begin{proof}
From the definition of $\widetilde\V_k$ in (\ref{wide-v-def}), we have
\begin{eqnarray}
\hspace*{-2cm}\begin{aligned}\notag
&\E_{\F_{k-1}}\left[\sqrt{\widetilde\V_{k,i,j}+\varepsilon}\right]\\
=&\E_{\F_{k-1}}\left[\sqrt{\beta\V_{k-1,i,j}+(1-\beta)\left(\left|\bP_{k-1,:,i}^T\nabla f(\X_k)\bQ_{k-1,:,j}\right|^2+\sigma_{op}^2\right)+\varepsilon}\right]
\end{aligned}
\end{eqnarray}
\begin{eqnarray}
\begin{aligned}\notag
=&\E_{\F_{k-1}}\left[\frac{\beta\V_{k-1,i,j}+(1-\beta)\sigma_{op}^2+\varepsilon}{\sqrt{\beta\V_{k-1,i,j}+(1-\beta)\left(\left|\bP_{k-1,:,i}^T\nabla f(\X_k)\bQ_{k-1,:,j}\right|^2+\sigma_{op}^2\right)+\varepsilon}}\right]\\
& + \E_{\F_{k-1}}\left[\frac{(1-\beta)\left|\bP_{k-1,:,i}^T\nabla f(\X_k)\bQ_{k-1,:,j}\right|^2}{\sqrt{\beta\V_{k-1,i,j}+(1-\beta)\left(\left|\bP_{k-1,:,i}^T\nabla f(\X_k)\bQ_{k-1,:,j}\right|^2+\sigma_{op}^2\right)+\varepsilon}}\right]\\
\leq&\E_{\F_{k-1}}\left[\sqrt{\beta\V_{k-1,i,j}+(1-\beta)\sigma_{op}^2+\varepsilon}\right]+(1-\beta)\E_{\F_{k-1}}\left[\frac{\left|\bP_{k-1,:,i}^T\nabla f(\X_k)\bQ_{k-1,:,j}\right|^2}{\sqrt{\widetilde\V_{k,i,j}+\varepsilon}}\right].
\end{aligned}
\end{eqnarray}
Consider the first part in the general case. From the recursion of $\V_{k-t,i,j}$, we have
\begin{eqnarray}
\begin{aligned}\notag
&\E_{\F_{k-t}}\left[\sqrt{\beta^t\V_{k-t,i,j}+(1-\beta^t)\sigma_{op}^2+\varepsilon}\right]\\
=&\E_{\F_{k-t}}\left[\sqrt{\beta^{t+1}\V_{k-t-1,i,j}+\beta^t(1-\beta)\left|\bP_{k-t-1,:,i}^T\G_{k-t}\bQ_{k-t-1,:,j}\right|^2+(1-\beta^t)\sigma_{op}^2+\varepsilon}\right]\\
=&\E_{\F_{k-t-1}}\left[\E_{k-t}\left[\sqrt{\beta^{t+1}\V_{k-t-1,i,j}+\beta^t(1-\beta)\left|\bP_{k-t-1,:,i}^T\G_{k-t}\bQ_{k-t-1,:,j}\right|^2+(1-\beta^t)\sigma_{op}^2+\varepsilon}\Big|\F_{k-t-1}\right]\right]\\
\overset{a}\leq&\E_{\F_{k-t-1}}\left[\sqrt{\beta^{t+1}\V_{k-t-1,i,j}+\beta^t(1-\beta)\E_{k-t}\left[\left|\bP_{k-t-1,:,i}^T\G_{k-t}\bQ_{k-t-1,:,j}\right|^2\Big|\F_{k-t-1}\right]+(1-\beta^t)\sigma_{op}^2+\varepsilon}\right]\\
\overset{b}\leq&\E_{\F_{k-t-1}}\left[\sqrt{\beta^{t+1}\V_{k-t-1,i,j}+\beta^t(1-\beta)\left(\left|\bP_{k-t-1,:,i}^T\nabla f(\X_{k-t})\bQ_{k-t-1,:,j}\right|^2+\sigma_{op}^2\right)+(1-\beta^t)\sigma_{op}^2+\varepsilon}\right]\\
=&\E_{\F_{k-t-1}}\left[\sqrt{\beta^{t+1}\V_{k-t-1,i,j}+\beta^t(1-\beta)\left|\bP_{k-t-1,:,i}^T\nabla f(\X_{k-t})\bQ_{k-t-1,:,j}\right|^2+(1-\beta^{t+1})\sigma_{op}^2+\varepsilon}\right]\\
=&\E_{\F_{k-t-1}}\left[\frac{\beta^{t+1}\V_{k-t-1,i,j}+(1-\beta^{t+1})\sigma_{op}^2+\varepsilon}{\sqrt{\beta^{t+1}\V_{k-t-1,i,j}+\beta^t(1-\beta)\left|\bP_{k-t-1,:,i}^T\nabla f(\X_{k-t})\bQ_{k-t-1,:,j}\right|^2+(1-\beta^{t+1})\sigma_{op}^2+\varepsilon}}\right]\\
& + \E_{\F_{k-t-1}}\left[\frac{\beta^t(1-\beta)\left|\bP_{k-t-1,:,i}^T\nabla f(\X_{k-t})\bQ_{k-t-1,:,j}\right|^2}{\sqrt{\beta^{t+1}\V_{k-t-1,i,j}+\beta^t(1-\beta)\left|\bP_{k-t-1,:,i}^T\nabla f(\X_{k-t})\bQ_{k-t-1,:,j}\right|^2+(1-\beta^{t+1})\sigma_{op}^2+\varepsilon}}\right]\\
\leq&\E_{\F_{k-t-1}}\left[\sqrt{\beta^{t+1}\V_{k-t-1,i,j}+(1-\beta^{t+1})\sigma_{op}^2+\varepsilon}\right]\\
& + \E_{\F_{k-t-1}}\left[\frac{\beta^t(1-\beta)\left|\bP_{k-t-1,:,i}^T\nabla f(\X_{k-t})\bQ_{k-t-1,:,j}\right|^2}{\sqrt{\beta^{t+1}\V_{k-t-1,i,j}+\beta^t(1-\beta)\left|\bP_{k-t-1,:,i}^T\nabla f(\X_{k-t})\bQ_{k-t-1,:,j}\right|^2+(\beta^t-\beta^{t+1})\sigma_{op}^2+\beta^t\varepsilon}}\right]\\
=&\E_{\F_{k-t-1}}\hspace*{-0.1cm}\left[\sqrt{\beta^{t+1}\V_{k-t-1,i,j}+(1-\beta^{t+1})\sigma_{op}^2+\varepsilon}\right]+\sqrt{\beta^t}(1-\beta)\E_{\F_{k-t-1}}\hspace*{-0.1cm}\left[\frac{\left|\bP_{k-t-1,:,i}^T\nabla f(\X_{k-t})\bQ_{k-t-1,:,j}\right|^2}{\sqrt{\widetilde\V_{k-t,i,j}+\varepsilon}}\right]\hspace*{-0.1cm},
\end{aligned}
\end{eqnarray}
where we use the concavity of $\sqrt{x}$ in $\overset{a}\leq$ and a similar induction to (\ref{equ2}) in $\overset{b}\leq$. Applying the above inequality recursively for $t=1,2,\cdots,k-1$, we have
\begin{eqnarray}
\begin{aligned}\notag
&\E_{\F_{k-1}}\left[\sqrt{\beta\V_{k-1,i,j}+(1-\beta)\sigma_{op}^2+\varepsilon}\right]\\
\leq&\sqrt{\beta^k\V_{0,i,j}+(1-\beta^k)\sigma_{op}^2+\varepsilon}+\sum_{t=1}^{k-1}\sqrt{\beta^t}(1-\beta)\E_{\F_{k-t-1}}\left[\frac{\left|\bP_{k-t-1,:,i}^T\nabla f(\X_{k-t})\bQ_{k-t-1,:,j}\right|^2}{\sqrt{\widetilde\V_{k-t,i,j}+\varepsilon}}\right]\\
=&\sqrt{\beta^k\V_{0,i,j}+(1-\beta^k)\sigma_{op}^2+\varepsilon}+\sum_{t=1}^{k-1}\sqrt{\beta^{k-t}}(1-\beta)\E_{\F_{t-1}}\left[\frac{\left|\bP_{t-1,:,i}^T\nabla f(\X_t)\bQ_{t-1,:,j}\right|^2}{\sqrt{\widetilde\V_{t,i,j}+\varepsilon}}\right]
\end{aligned}
\end{eqnarray}
and
\begin{eqnarray}
\begin{aligned}\notag
\E_{\F_{k-1}}\left[\sqrt{\widetilde\V_{k,i,j}+\varepsilon}\right]\leq& \sqrt{\beta^k\V_{0,i,j}+(1-\beta^k)\sigma_{op}^2+\varepsilon}+\sum_{t=1}^k\sqrt{\beta^{k-t}}(1-\beta)\E_{\F_{t-1}}\left[\frac{\left|\bP_{t-1,:,i}^T\nabla f(\X_t)\bQ_{t-1,:,j}\right|^2}{\sqrt{\widetilde\V_{t,i,j}+\varepsilon}}\right]\\
\leq&\sqrt{\sigma_{op}^2+\varepsilon}+\sum_{t=1}^k\sqrt{\beta^{k-t}}(1-\beta)\E_{\F_{t-1}}\left[\frac{\left|\bP_{t-1,:,i}^T\nabla f(\X_t)\bQ_{t-1,:,j}\right|^2}{\sqrt{\widetilde\V_{t,i,j}+\varepsilon}}\right],
\end{aligned}
\end{eqnarray}
where we use $\V_0=\0$. Summing over $i=1,2,\cdots,m$, $j=1,2,\cdots,n$, and $k=1,2,\cdots,K$, we have
\begin{eqnarray}
\begin{aligned}\notag
&\sum_{k=1}^K\sum_{i=1}^m\sum_{j=1}^n \E_{\F_{k-1}}\left[\sqrt{\widetilde\V_{k,i,j}+\varepsilon}\right]\\
\leq& Kmn\sqrt{\sigma_{op}^2+\varepsilon}+\sum_{k=1}^K\sum_{t=1}^k\sqrt{\beta^{k-t}}(1-\beta)\sum_{i=1}^m\sum_{j=1}^n\E_{\F_{t-1}}\left[\frac{\left|\bP_{t-1,:,i}^T\nabla f(\X_t)\bQ_{t-1,:,j}\right|^2}{\sqrt{\widetilde\V_{t,i,j}+\varepsilon}}\right]\\
=& Kmn\sqrt{\sigma_{op}^2+\varepsilon}+\sum_{t=1}^K\sum_{k=t}^K\sqrt{\beta^{k-t}}(1-\beta)\sum_{i=1}^m\sum_{j=1}^n\E_{\F_{t-1}}\left[\frac{\left|\bP_{t-1,:,i}^T\nabla f(\X_t)\bQ_{t-1,:,j}\right|^2}{\sqrt{\widetilde\V_{t,i,j}+\varepsilon}}\right]\\
\leq& Kmn\sqrt{\sigma_{op}^2+\varepsilon}+\frac{1-\beta}{1-\sqrt{\beta}}\sum_{t=1}^K\sum_{i=1}^m\sum_{j=1}^n\E_{\F_{t-1}}\left[\frac{\left|\bP_{t-1,:,i}^T\nabla f(\X_t)\bQ_{t-1,:,j}\right|^2}{\sqrt{\widetilde\V_{t,i,j}+\varepsilon}}\right]\\
=& Kmn\sqrt{\sigma_{op}^2+\varepsilon}+(1+\sqrt{\beta})\sum_{t=1}^K\sum_{i=1}^m\sum_{j=1}^n\E_{\F_{t-1}}\left[\frac{\left|\bP_{t-1,:,i}^T\nabla f(\X_t)\bQ_{t-1,:,j}\right|^2}{\sqrt{\widetilde\V_{t,i,j}+\varepsilon}}\right]\\
=& Kmn\sqrt{\sigma_{op}^2+\varepsilon}+(1+\sqrt{\beta})\sum_{t=1}^K\E_{\F_{t-1}}\left[\left\|\frac{\bP_{t-1}^T\nabla f(\X_t)\bQ_{t-1}}{\sqrt[4]{\widetilde\V_t+\varepsilon}}\right\|_F^2\right].
\end{aligned}
\end{eqnarray}
\end{proof}

The next lemma is used to transform rate (\ref{rate1}) to (\ref{rate2}).
\begin{lemma}\label{nuclear-l1-norms}
For any matrix $\A$, we have $\|\A\|_*\leq \|\A\|_1$.
\end{lemma}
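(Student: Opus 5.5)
The plan is to write $\A$ as a sum of rank-one matrices, one per entry, and apply the triangle inequality for the nuclear norm. Let $\A\in\mathbb R^{m\times n}$ and let $\e_i\in\mathbb R^m$, $\e_j\in\mathbb R^n$ denote the standard basis vectors. Then
\begin{equation*}
\A=\sum_{i=1}^m\sum_{j=1}^n \A_{i,j}\,\e_i\e_j^T .
\end{equation*}
Since $\|\cdot\|_*$ is a norm, the triangle inequality gives
\begin{equation*}
\|\A\|_*\leq\sum_{i=1}^m\sum_{j=1}^n |\A_{i,j}|\,\|\e_i\e_j^T\|_* .
\end{equation*}

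It remains to evaluate the nuclear norm of each rank-one term. For unit vectors $\x,\y$, the matrix $\x\y^T$ has exactly one nonzero singular value. That value is $\|\x\|\|\y\|=1$, so $\|\e_i\e_j^T\|_*=1$. Substituting this gives $\|\A\|_*\leq\sum_{i,j}|\A_{i,j}|=\|\A\|_1$, which is the claim.

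The only point needing care is that the nuclear norm satisfies the triangle inequality. I would take this as standard, since it is a Schatten norm. If a self-contained justification is wanted, I would use the duality $\|\A\|_*=\max_{\|\mathbf{B}\|_{op}\leq1}\<\A,\mathbf{B}\>$. This representation also yields the lemma directly, because $|\mathbf{B}_{i,j}|=|\e_i^T\mathbf{B}\e_j|\leq\|\mathbf{B}\|_{op}\leq 1$. Hence
\begin{equation*}
\<\A,\mathbf{B}\>=\sum_{i,j}\A_{i,j}\mathbf{B}_{i,j}\leq\sum_{i,j}|\A_{i,j}|=\|\A\|_1 .
\end{equation*}
Either route is short, so I do not expect any real obstacle.
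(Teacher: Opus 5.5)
Your proof is correct and matches the paper's argument: expand $\A=\sum_{i,j}\A_{ij}\e_i\e_j^T$, apply the triangle inequality for the nuclear norm, and use $\|\e_i\e_j^T\|_*=1$. The duality argument you give as a backup ($|\mathbf{B}_{ij}|\leq\|\mathbf{B}\|_{op}$) is also valid but not needed.
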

\begin{proof}
Let $\e_i$ denote the $i$-th standard basis vector.
\begin{eqnarray}
\begin{aligned}\notag
\|\A\|_*=\left\|\sum_{i,j}\A_{ij}\e_i\e_j^T\right\|_*\leq \sum_{i,j}|\A_{ij}|\left\|\e_i\e_j^T\right\|_*=\sum_{i,j}|\A_{ij}|=\|\A\|_1.
\end{aligned}
\end{eqnarray}
\end{proof}

\bibliography{SOAP}
\bibliographystyle{icml2020}

\end{document}